\newtheorem{defi}{Definition}
\newtheorem{proposition}{Proposition}
\title{\LARGE \bf Distributionally Robust  Games: \\   $f$-Divergence and Learning}
\author{Dario Bauso$^{1},$ Jian Gao$^{2}$ and Hamidou Tembine$^{2}$
\thanks{$^{1}$ Department of Automatic Control and Systems Engineering, The University of Sheffield, Mappin Street, Sheffield, S1 3JD, United Kingdom {\tt\small d.bauso@sheffield.ac.uk}.}
\thanks{$^{2}$ Learning \& Game Theory Laboratory, 
New York University Abu Dhabi {\tt\small tembine@nyu.edu}.}
}
\begin{document}


\maketitle

\begin{abstract}
 In this paper we introduce the novel framework of distributionally robust games. These are multi-player games where each player models the state of nature using a worst-case distribution, also called adversarial distribution. Thus each player's payoff depends on the other players' decisions and on the decision of a virtual player (nature) who selects an adversarial distribution of scenarios. This paper provides three main contributions. Firstly, the distributionally robust game is formulated using the statistical notions of $f$-divergence between two distributions, here represented by the adversarial distribution, and the exact distribution.  Secondly,  the complexity of the problem is significantly reduced by means of triality theory. Thirdly, stochastic  Bregman  learning algorithms are proposed to speedup the computation of robust equilibria. Finally, the theoretical findings are illustrated in a convex setting and its limitations are tested with a  non-convex non-concave function.
\end{abstract}


{\bf Keywords:} Robustness, distribution uncertainty, stochastic optimization, robust game
\newpage

\tableofcontents
\newpage

\section{Introduction}

Games with payoff uncertainty refer to games where the outcome of a play is not known with certainty by the players. Such games are also called incomplete information games and can be formalized in different ways. Distribution-free models of incomplete information games, both with and without private information are examined in \cite{free,free2}. There the players use a robust optimization approach to contend with the worst-case scenario payoff. The distribution free approach relaxes the well-known Bayesian game model approach by Harsanyi. The limitation of the distribution-free model is that the uncertainty set has to be carefully designed and in most cases such approach leads to too conservative and unrealistic scenarios.     

Strategic learning has  proven to be a powerful approach in stochastic games. In particular, its algorithmic nature is well suited to accommodate parallel and distributed information exchange and processing as well as hardware realizability. However, almost all  existing learning approaches work well only for specific classes of games such as concave-convex zero-sum games, convex potential games, and some S-modular game problems with unimodal objective functions. For more general classes of games, convergence of strategic learning dynamics is still an open issue.  
 In addition, learning algorithms for finding fixed points or equilibria for general classes of games still present several challenges. For polymatrix games with finite action spaces, there have been great progress including Cournot adjustment, Brown-von Neumann-Nash dynamics, reinforcement learning, combined learning  (see \cite{book} and the references  therein). For continuous action spaces, however, only few handful works are available. Evolutionary dynamics and revision protocols based actions has been proposed \cite{conti4,conti1,conti5}.  
The exploration of continuous action takes too much time if the dynamics is based on individual action or measurable subsets \cite{conti6,conti3,conti2}. Moreover, the convergence time of these existing strategic learning algorithms (when a convergence to a point or  a limit cycle occurs)  are unacceptably high even for potential games and it often requires strong assumptions such as bounded densities.  
The above mentioned prior works do not consider  robust games setting. In \cite{free,free2} a robust game framework is presented. The authors defined a distributed-free approach (by considering  worst-case performance). However the choice of the uncertainty set remains an important part of the robust game modelling.
In this work we are interested in  learning in distributionally robust games under $f$-divergence.

\subsection{Contribution} We make several contributions in this paper. We introduce for the first time a novel game model, called distributionally robust game. This game provides a new and original way of addressing game scenarios with incomplete information.  For this game, we provide a rigorous definition of distributionally robust equilibrium. Distributionally robust games accommodate both finite and continuous action spaces. 
 The relevance in formulating such a new game is in that it relaxes
 the assumptions of Harsanyi's Bayesian games. Distributionally robust games differ from the distribution-free framework of Aghassi \& Bertsimas in  \cite{free,free2} in that in the distribution-free approach  the interval (or generally the uncertainty set) needs to be known (learnable) by the decision-maker. 
In contrast to this, in the distributionally robust approach any alternative distribution within a divergence ball can be tested. 

As second contribution, we use a \emph{triality theory},  which  reduces considerably the curse of dimensionality of the problem. We prove the existence of equilibria in any such robust finite game under suitable conditions.

As third contribution, we provide a computational method based on Bregman flow for approximately computing equilibria. Such a computational method allows us to test the implementability of the approach on numerical examples.  We introduce a class of distributionally robust games with continuous action spaces, for which a subset of equilibria can be computed using the Bregman algorithm.  We show that the resulting iterative dynamics, which we call Bregman dynamics, is characterized by  double exponential decay and convergence to distributionally robust equilibria.

\subsection{Structure}
 The rest of the paper is structured as follows. In Section \ref{sec:two} we introduce distributionally robust games. Section \ref{sec:three} presents a  learning algorithm for robust equilibria.  Section \ref{sec:four}  focuses on stochastic Bregman learning.
 Section \ref{sec:five} provides numerical results. Discussions on the finite action spaces are presented in Section \ref{sec:six}.
 Section \ref{sec:last} concludes the paper.
   
\section{Distributionally Robust Games} \label{sec:two}
In this section, we first introduce the distribution uncertainty set, and then formulate the distributionally robust game problem. Then, we define distributionally robust equilibrium, discuss triality theory and apply such theory to reduce the model via Lagrangian relaxation.

\subsection{Distribution Uncertainty Set}
Let  $(\Omega, \mathcal{F},m)$ be a probability space. Here $m$  is a probability measure defined on  $(\Omega, \mathcal{F}).$ 
The distribution $m$ of the state $\omega$ is used to capture the probability of the different scenarios and of the corresponding performance function obtained under each of such scenarios for fixed action profile.  We assume that the exact distribution of the state
 is not available in general.  Therefore we propose an uncertainty/constraint set among all the possible  distributions with a divergence bounded by above by a scalar value $\rho.$
Such a constraint takes the form
$$
B_{\rho}(m)=\{ \tilde{m} \ | \ \int_{\Omega} d\tilde{m}=\tilde{m}(\Omega)=1,\ D_{f}(\tilde{m}\ || \ m)\leq \rho\},
$$
 where $D_{f}$ is the so-called $f$-divergence from the probability measure $m$ to $\tilde{m}$ defined as 
$$D_{f}(\tilde{m} \ || \ m)=\int_{\omega \in\Omega} f\left(\frac{d\tilde{m}}{dm}  \right) dm -f(1).
$$

Recall that for a convex (and proper)  function $f$ the  Legendre-Fenchel duality holds: $(f^*)^*=f$ where 
\begin{eqnarray}
f^*(\xi) 
= \sup_{x}[\langle x,\xi\rangle-f(x)]
=-\inf_{x}[ f(x)-\langle x,\xi\rangle].
\end{eqnarray}
\subsection{Problem Formulation}
Each player $j$ chooses $a_j\in \mathcal{A}_j$ to optimize  the worst loss performance functional $\mathbb{E}_{\tilde{m}} l_j(a,\omega) $ subject to the constraint that the divergence $D_f(\tilde{m}\parallel m )\leq \rho$. This means that the worst loss performance is obtained under the assumption that a virtual player (nature) acts as a discriminator/attacker who modifies  the distribution $m$  into  $\tilde{m}$ with an effort  capacity that should not exceed $\rho>0.$    The robust stochastic optimization of player $j$ given $(a_{j'})_{j'\neq j}, m, \rho$ is given by
\begin{equation}\label{eq00} (P_j)\left\{
\begin{array}{l}
\inf_{a_j\in \mathcal{A}_j} \ \sup_{\tilde{m}\in B_{\rho}(m)}\ \mathbb{E}_{\tilde{m}} l_j(a,\omega).
\end{array} \right.
\end{equation}

Throughout the paper we assume that the following hold:
The measure  $\tilde{m}$ is  continuous with the respect to $m$ and it is not a given profile, it could be deformed or falsified by the discriminator.
The function $ l_j(., \omega)$ is proper and upper semi-continuous for $m-$almost all $\omega\in\Omega.$ Either  the domain $\mathcal{A}_j$ is a non-empty  compact  set or  $\mathbb{E}_{\tilde{m}} l_j(a,\omega)$ is coercive. 

\begin{defi}[Distributionally Robust Game]
The robust game $\mathcal{G}(m)$ involves
\begin{itemize}
\item the set of players $\mathcal{J}=\{1,2,\ldots, n\}, \ n\geq 2$ 
\item the decision space $\mathcal{A}_j$ of each player $j$, $j\in \mathcal{J}$
\item the uncertainty set $B_{\rho}(m)$ defined on the set of probability distributions  $m$  on $\Omega$ and $\rho>0$
\item  the payoff function $\mathbb{E}_{\tilde{m}} l_j(a,\omega)$ of player $j$, $j\in \mathcal{J}$.  
\end{itemize}
\end{defi}

With the above game in mind, we can introduce the following solution concept. 
\begin{defi}[Distributionally Robust Equilibrium]
Let $a^*_j$ be the configuration of player $j$ and  $a^*_{-j}:=(a^*_k)_{k\neq j}.$
A strategy profile $a^*=(a^*_1,\ldots,a^*_n)$ satisfying 
$$\sup_{\tilde{m}\in B_{\rho}(m)}\mathbb{E}_{\tilde{m}} l_j(a^*,\omega)\leq \sup_{\tilde{m}\in B_{\rho}(m)}\mathbb{E}_{\tilde{m}} l_j(a_j,a^*_{-j},\omega),$$ for every $\ a_j\in\mathcal{A}_j$ and every agent $j,$  is said \emph{distributionally robust  pure Nash equilibrium} of  game $\mathcal{G}(m).$
\end{defi}

\subsection{From Duality to Triality Theory}\label{sec:TT}
We here streamline the basic idea of triality theory. To this purpose, consider uncoupled domains $\mathcal{A}_j,\ j\in \mathcal{J}.$
For a general function $l_2,$ one has $$\sup_{a_2\in \mathcal{A}_2}\inf_{a_1\in \mathcal{A}_1}l_2(a_1,a_2)\leq   \inf_{a_1\in \mathcal{A}_1}\sup_{a_2\in \mathcal{A}_2}l_2(a_1,a_2)$$ and the difference 
$$ \min_{a_1\in \mathcal{A}_1}\max_{a_2\in \mathcal{A}_2}l_2(a_1,a_2)-\max_{a_2\in \mathcal{A}_2}\min_{a_1\in \mathcal{A}_1}l_2(a_1,a_2)$$ is the well-known duality gap.
As it is widely known in duality theory from Sion's Theorem  \cite{sion} (which is an extension of von Neumann minimax Theorem) there is an equality, for example for convex-concave function, and the value is achieved by a saddle point in the case of non-empty convex compact domain.
For a general function $l_3$, $(a_1,a_2, a_3) \mapsto l_3(a_1,a_2, a_3)$ one has 
\begin{eqnarray*}
\inf_{a_3\in \mathcal{A}_3}\sup_{a_2\in \mathcal{A}_2}\inf_{a_1\in \mathcal{A}_1}l_3(.)\leq   
\inf_{a_1\in \mathcal{A}_1,a_3\in \mathcal{A}_3}\sup_{a_2\in \mathcal{A}_2}l_3(.), \\ 
\sup_{a_3\in \mathcal{A}_3}\inf_{a_2\in \mathcal{A}_2}\sup_{a_1\in \mathcal{A}_1}l_3(.)\geq   
\sup_{a_1\in \mathcal{A}_1,a_3\in \mathcal{A}_3}\inf_{a_2\in \mathcal{A}_2}l_3(.).
\end{eqnarray*}

\begin{proposition}[Triality]
Let $ (a_1,a_2, a_3) \mapsto l_3(a_1,a_2, a_3)\in \mathbb{R}$ be a  function $l_3$ defined on $\prod_{i=1}^3 \mathcal{A}_i.$ Then, the following inequalities hold:
\begin{equation} \label{eqtri}
\begin{array}{ll}
\sup_{a_2\in \mathcal{A}_2}\inf_{a_1\in \mathcal{A}_1,a_3\in \mathcal{A}_3}l_3(a_1,a_2, a_3)\leq \\
\inf_{a_3\in \mathcal{A}_3}\sup_{a_2\in \mathcal{A}_2}\inf_{a_1\in \mathcal{A}_1}l_3(a_1,a_2, a_3)\leq   \\
\inf_{a_1\in \mathcal{A}_1,a_3\in \mathcal{A}_3}\sup_{a_2\in \mathcal{A}_2}l_3(a_1,a_2, a_3),
\end{array}
\end{equation}
and similarly
\begin{equation} \label{eqtri2}
\begin{array}{ll}
\sup_{a_1\in \mathcal{A}_1,a_3\in \mathcal{A}_3}\inf_{a_2\in \mathcal{A}_2}l_3(a_1,a_2, a_3)\leq \\
 \sup_{a_3\in \mathcal{A}_3}\inf_{a_2\in \mathcal{A}_2}\sup_{a_1\in \mathcal{A}_1}l_3(a_1,a_2, a_3)\leq \\
\inf_{a_2\in \mathcal{A}_2}\sup_{a_1\in \mathcal{A}_1,a_3\in \mathcal{A}_3}l_3(a_1,a_2, a_3).
\end{array}
\end{equation}
\end{proposition}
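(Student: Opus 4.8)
The plan is to reduce both chains \eqref{eqtri} and \eqref{eqtri2} to the elementary two–variable max–min inequality, namely that for any $g:\mathcal{X}\times\mathcal{Y}\to\mathbb{R}\cup\{\pm\infty\}$ one has $\sup_{y}\inf_{x}g(x,y)\le\inf_{x}\sup_{y}g(x,y)$, together with monotonicity of $\inf$ and $\sup$ under taking a further extremum, and the fact that over the uncoupled domains iterated extrema collapse, i.e. $\inf_{a_3}\inf_{a_1}=\inf_{(a_1,a_3)}$ and $\sup_{a_3}\sup_{a_1}=\sup_{(a_1,a_3)}$. No attainment of the extrema is required, so every manipulation is carried out in the extended reals $[-\infty,+\infty]$.

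For the second inequality in \eqref{eqtri} I would simply invoke the first of the two displayed inequalities preceding the proposition, which reads $\inf_{a_3}\sup_{a_2}\inf_{a_1}l_3\le\inf_{a_1,a_3}\sup_{a_2}l_3$ verbatim; if a self-contained argument is preferred, one applies the max–min inequality for each fixed $a_3$ to get $\sup_{a_2}\inf_{a_1}l_3(\cdot,\cdot,a_3)\le\inf_{a_1}\sup_{a_2}l_3(\cdot,\cdot,a_3)$ and then takes $\inf_{a_3}$ of both sides, using that $\inf_{a_3}$ preserves the inequality and that $\inf_{a_3}\inf_{a_1}=\inf_{a_1,a_3}$. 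For the first inequality in \eqref{eqtri}, set $\phi(a_2,a_3):=\inf_{a_1}l_3(a_1,a_2,a_3)$; then $\inf_{a_1,a_3}l_3=\inf_{a_3}\phi(a_2,a_3)$, so the left-hand side of \eqref{eqtri} equals $\sup_{a_2}\inf_{a_3}\phi(a_2,a_3)$ while the middle term equals $\inf_{a_3}\sup_{a_2}\phi(a_2,a_3)$, and the max–min inequality applied to $\phi$ closes the gap.

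The chain \eqref{eqtri2} is then obtained either dually or by the same device. The cleanest route is to apply \eqref{eqtri} to $-l_3$ and multiply the resulting string of inequalities by $-1$, which reverses the order and exchanges every $\inf$ with the corresponding $\sup$; matching terms shows this yields exactly \eqref{eqtri2}. Alternatively one argues directly: the first inequality of \eqref{eqtri2} is the second displayed inequality preceding the proposition after rewriting $\ge$ as $\le$, and for the second inequality of \eqref{eqtri2} one sets $\psi(a_2,a_3):=\sup_{a_1}l_3(a_1,a_2,a_3)$, observes $\sup_{a_1,a_3}l_3=\sup_{a_3}\psi(a_2,a_3)$, so that the claim becomes $\sup_{a_3}\inf_{a_2}\psi(a_2,a_3)\le\inf_{a_2}\sup_{a_3}\psi(a_2,a_3)$, which is once more the max–min inequality, now applied to $\psi$.

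Since every step is either a citation of an inequality already displayed in the excerpt or a direct application of the max–min inequality and monotonicity, there is no genuine obstacle here. The only points requiring a word of care are the bookkeeping of which variable plays the role of $x$ and which of $y$ in each invocation of the max–min inequality, and the harmless remark that all quantities may a priori be $\pm\infty$, a regime in which both the max–min inequality and the collapse of iterated extrema over uncoupled sets continue to hold.
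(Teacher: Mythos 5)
Your proof is correct and follows essentially the same route as the paper: both arguments reduce everything to the elementary two-variable max--min inequality applied for each fixed $a_3$ and then pass to $\inf_{a_3}$ or $\sup_{a_3}$, using that iterated extrema over the uncoupled product domains collapse. If anything, you are more careful than the paper at the two places it waves its hands --- the first inequality of \eqref{eqtri} is not literally an ``immediate'' consequence of the fixed-$a_3$ inequality but, as you note, a fresh application of max--min to $\phi(a_2,a_3)=\inf_{a_1}l_3$, and your duality trick (applying \eqref{eqtri} to $-l_3$) delivers \eqref{eqtri2} without the implicit relabelling of $a_1$ and $a_2$ on which the paper's derivation silently relies.
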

\begin{proof}
First we shall prove the $ \sup\inf$ inequality. Define $$g(a_2,a_3)=\inf_{a_1\in \mathcal{A}_1}l_3(a_1,a_2, a_3).$$ Thus, for all $a_2,a_3,\ $ one has 
$ g(a_2,a_3)\leq l_3(a_1,a_2, a_3).$ 
It follows that, for any $ a_1, a_3,$
$$
 \sup_{a_2\in \mathcal{A}_2} g(a_2,a_3)\leq  \sup_{a_2\in \mathcal{A}_2} l_3(a_1,a_2, a_3).\  
$$

Using the definition of $g,$ one obtain  
$$
 \sup_{a_2\in \mathcal{A}_2} \inf_{a_1\in \mathcal{A}_1}l_3(a_1,a_2, a_3)\leq  \sup_{a_2\in \mathcal{A}_2} l_3(a_1,a_2, a_3),\  \forall a_1,a_3.
$$

Taking the infinimum in $a_1$ yields:
\begin{equation}\label{eq:minmax}
 \sup_{a_2\in \mathcal{A}_2} \inf_{a_1\in \mathcal{A}_1}l_3(a_1,a_2, a_3)\leq  \inf_{a_1\in \mathcal{A}_1}\sup_{a_2\in \mathcal{A}_2} l_3(a_1,a_2, a_3),\  \forall a_3
\end{equation}

Now, for variable in $a_3$ we use two operations:
\begin{itemize}\item Taking the infininum in inequality (\ref{eq:minmax}) in $a_3$ yields

$$
 \inf_{a_3\in \mathcal{A}_3}\sup_{a_2\in \mathcal{A}_2} \inf_{a_1\in \mathcal{A}_1}l_3(a_1,a_2, a_3)\leq  $$  $$   \inf_{a_3\in \mathcal{A}_3}\inf_{a_1\in \mathcal{A}_1}\sup_{a_2\in \mathcal{A}_2} l_3(a_1,a_2, a_3)$$ $$= \inf_{(a_1,a_3)\in \mathcal{A}_1\times \mathcal{A}_3 }\sup_{a_2\in \mathcal{A}_2} l_3(a_1,a_2, a_3),
$$
which proves the second part of the inequalities (\ref{eqtri}). The first part of  the inequalities (\ref{eqtri}) follows immediately from (\ref{eq:minmax}).
\item Taking the supremum in inequality (\ref{eq:minmax}) in $a_3$ yields
 $$\sup_{(a_2,a_3)\in \mathcal{A}_2\times \mathcal{A}_3} \inf_{a_1\in \mathcal{A}_1}l_3(a_1,a_2, a_3)$$  $$\leq   \sup_{a_3\in \mathcal{A}_3}\inf_{a_1\in \mathcal{A}_1}\sup_{a_2\in \mathcal{A}_2} l_3(a_1,a_2, a_3),$$ which proves the first part of the  inequalities (\ref{eqtri2}). The second part of the inequalities (\ref{eqtri2}) follows immediately from (\ref{eq:minmax}).
\end{itemize}
This completes the proof.

\end{proof}

We use the above inequalities in the Lagrangean relaxation of the MaxMin Robust Game. 
\subsection{MaxMin Robust Game: Infinite Dimension}
Assume that $a\mapsto \mathbb{E}_{\tilde{m}} l_j(a,\omega)$ is continuous for $m-$almost all $\omega.$ Then, the functional 
$F_j:\ \tilde{m}\mapsto \inf_{a_j}\mathbb{E}_{\tilde{m}} l_j(a,\omega)$ is Gateaux differentiable with derivative 
$$
F_{j,m} (\hat{m})=\inf_{a_j\in \mathcal{A}_j^*(m)}\mathbb{E}_{\hat{m}} l_j(a,\omega),
$$
where $ \mathcal{A}_j^*(m)=\arg\min_{a_j} \mathbb{E}_{m} l_j(a,\omega)$ is the best-response under $m.$ This derivative in the space of square integrable measurable functions under $m$ which is of infinite dimensions, does not facilitate the computation of the robust optimal strategy $a_j^*, \tilde{m}^*.$  
 Below we propose an equivalent problem that reduces considerably the curse of dimensionality of the problem.

\subsection{MaxMin Robust Game: Dimension Reduction}
In order to reduce the curse of dimensionality of the problem we use a triality theory.
The robust best-response problem  of agent $j$  is equivalent to
\begin{equation}\label{eq022} \left\{
\begin{array}{l}
\inf_{a_j} \sup_{L\in L_{\rho}(m)} \mathbb{E}_{m} [l_j L]; 
\end{array} \right.
\end{equation}
where $L(\omega)=\frac{d\tilde{m}}{dm}(\omega)$ is the likelihood and  set $ L_{\rho}(m)$ is 
$$L_{\rho}(m)=\{ L\  | \   \int_{\omega}  f(L(\omega)) d{m} -f(1)\leq \rho,\ \  \int_{\omega} L(\omega)dm=1\}.$$
We introduce the Lagrangian as 
$$\begin{array}{ll}
\tilde{l}_j(a,L,\lambda,\mu)=&  \int_{\omega}  l_j(a,\omega) L(\omega) dm\\ & +\lambda(\rho+f(1)-\int_{\omega}  f(L(\omega)) d{m})\\ &
+\mu(1-\int_{\omega} L(\omega)dm(\omega)),
\end{array}$$ where $\lambda\geq 0$ and $\mu\in \mathbb{R}.$
The problem solved by Player $j$ is
\begin{equation}\label{eq02} (\tilde{P}^*_j)\left\{
\begin{array}{l}
\inf_{a_j} \sup_{L\in L_{\rho}(m)}\inf_{\lambda\geq 0,\mu\in \mathbb{R}} \tilde{l}_j(a,L,\lambda,\mu).
\end{array} \right.
\end{equation}\
A full understanding  of  problem $ (\tilde{P}^*_j)$ requires a triality theory (not a duality theory) whose main principles were streamlined in Section \ref{sec:TT}. The underlying idea is that  one can use a  transformation of the last two terms to derive a finite dimensional optimization problem.
The Lagrangian $\tilde{l}_j$ of agent $j$  is clearly concave in $L$ and convex in $\lambda,\mu$ and is semi-continuous jointly.
By the triality theory above,  $\tilde{l}_j: (a,L,\lambda,\mu) \mapsto \tilde{l}_j(a,L,\lambda,\mu)$  satisfies the $\sup\inf$ inequality and one has the following: 
$$\inf_{a_j} \sup_{L\in L_{\rho}(m)}\inf_{\lambda\geq 0,\mu\in \mathbb{R}} \tilde{l}_j(.)
\leq \inf_{a_j}\inf_{\lambda\geq 0,\mu\in \mathbb{R}}  \sup_{L\in L_{\rho}(m)}\tilde{l}_j(.).$$
In this case there is no  gap in the second part of the optimization and the following equality holds:
$$\inf_{a_j} \sup_{L\in L_{\rho}(m)}\inf_{\lambda\geq 0,\mu\in \mathbb{R}} \tilde{l}_j(.) 
=\inf_{a_j}\inf_{\lambda\geq 0,\mu\in \mathbb{R}}  \sup_{L\in L_{\rho}(m)}\tilde{l}_j(.).$$
The latter problem can be rewritten as
\begin{equation}\label{eq04} (\tilde{P}^*_j)\left\{
\begin{array}{l}
\inf_{a_j\in \mathcal{A}_j,\lambda\geq 0,\mu\in \mathbb{R}}[\sup_{L\in L_{\rho}(m)} \tilde{l}_j(a,L,\lambda,\mu)].
\end{array} \right.
\end{equation}\
The Lagrangian function takes the form as 
$\tilde{l}_j=  \lambda(\rho+f(1))+\mu +\int \{ L [ {l}_j-\mu] -\lambda f(L)\} dm.$
It follows that
\begin{eqnarray}
\sup_{L\in L_{\rho}(m)} \tilde{l}_j(a,L,\lambda,\mu)=\nonumber \\   \lambda(\rho+f(1))+\mu +\sup_{L}\int \{ L [ {l}_j-\mu] - \lambda f(L)\} dm.
\end{eqnarray}
Introducing the Fenchel-Legendre transform on $L$ and exchanging $\sup$ and $\int,$ one gets
$$
\sup_{L\in L_{\rho}(m)} \tilde{l}_j(.)
= \lambda(\rho+f(1))+\mu +\int \lambda f^*(\frac{ {l}_j-\mu}{\lambda}) dm.
$$
Since $\mathcal{A}_j\times\mathbb{R}_+\times \mathbb{R}$ is a subset of a finite dimensional vector space, then it follows that the robust best-response problem of agent $j$ is 
equivalent to the finite dimensional stochastic optimization problem:
\begin{equation}\label{eq05} ({P}^*_j)\left\{
\begin{array}{l}
\inf_{a_j\in \mathcal{A}_j,\lambda\geq 0,\mu\in \mathbb{R}}l_j^*(a,\lambda,\mu,m)
\\ l_j^*(a,\lambda,\mu,m)=\\
 \lambda(\rho+f(1))+\mu +\int \lambda f^*(\frac{ {l}_j-\mu}{\lambda}) dm\\
 = \mathbb{E}_{m} h_j.
\end{array} \right.
\end{equation}
where $h_j$ is the integrand cost $ \lambda(\rho+f(1))+\mu + \lambda f^*(\frac{ {l}_j-\mu}{\lambda}).$
We have converted the infinite dimensional problem $(P_j)$ into a finite dimensional problem $(P_j^*).$ The above calculations culminate in the following result: 
\begin{proposition}
If $a, \lambda^*(a), \mu^*(a),$ is a solution of $(P_j^*)$ then
the optimal likelihood $L^*$ is such that $\int_{\omega} L^* dm=1,\ f'(L^*)=\frac{ l_j-\mu^*}{\lambda^*}.$ This means that
$a_j$ and  $ d\tilde{m}^*=L^*dm$ provide a  solution of the original problem   $(P_j).$
\end{proposition}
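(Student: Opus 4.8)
The plan is to trace back through the chain of value-preserving equivalences $(P_j)\Leftrightarrow(\tilde P_j^*)\Leftrightarrow(P_j^*)$ established above and to reconstruct the worst-case likelihood $L^*$ from the first-order conditions of the reduced finite-dimensional problem $(P_j^*)$. First I would fix a minimizing triple $(a,\lambda^*,\mu^*)$ of $(P_j^*)$, assuming for the moment $\lambda^*>0$ (the degenerate case $\lambda^*=0$ treated separately). Since $f$ is proper, closed and convex, the pointwise supremum defining the conjugate, $\lambda^* f^*\!\left(\frac{l_j-\mu^*}{\lambda^*}\right)=\sup_{\ell}\{\ell(l_j-\mu^*)-\lambda^* f(\ell)\}$, is attained at any $\ell$ with $\frac{l_j(a,\omega)-\mu^*}{\lambda^*}\in\partial f(\ell)$; under differentiability of $f$ this is exactly $f'(L^*(\omega))=\frac{l_j(a,\omega)-\mu^*}{\lambda^*}$, equivalently $L^*(\omega)=(f^*)'\!\left(\frac{l_j(a,\omega)-\mu^*}{\lambda^*}\right)$. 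This $L^*$ is the maximizer of $\tilde l_j(a,\cdot,\lambda^*,\mu^*)$ hidden inside the $\sup_L$ term of $(P_j^*)$, which yields the stated characterization.

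Next I would check feasibility, i.e. $L^*\in L_\rho(m)$. Normalization $\int_\Omega L^*\,dm=1$ follows from stationarity of $l_j^*(a,\lambda,\mu,m)$ in $\mu$ at $\mu^*$: differentiating under the integral gives $0=\partial_\mu l_j^*=1-\int_\Omega (f^*)'\!\left(\frac{l_j-\mu^*}{\lambda^*}\right)dm=1-\int_\Omega L^*\,dm$. The divergence bound $\int_\Omega f(L^*)\,dm-f(1)\le\rho$ follows from the KKT/complementary-slackness condition in $\lambda$: when $\lambda^*>0$, stationarity in $\lambda$ combined with the Fenchel identity $f^*(\xi)-\xi(f^*)'(\xi)=-f((f^*)'(\xi))$ gives $0=\rho+f(1)-\int_\Omega f(L^*)\,dm$, so the constraint is active; thus $\tilde m^*:=L^*dm$ is a probability measure in $B_\rho(m)$.

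Finally I would close the loop. Because the triality step holds with equality here (the Lagrangian $\tilde l_j$ is concave in $L$, convex in $(\lambda,\mu)$ and jointly semicontinuous, so the $\inf\sup\inf$ collapses to the $\inf\inf\sup$ as argued above) and the Fenchel step merely rewrites the inner $\sup_L$ in closed form, the optimal value of $(P_j)$ equals $\inf_{a_j,\lambda\ge0,\mu}l_j^*(a,\lambda,\mu,m)$, attained at $(a,\lambda^*,\mu^*)$ with the inner supremum attained at $L^*$. Unwinding definitions, $\sup_{\tilde m\in B_\rho(m)}\mathbb{E}_{\tilde m}l_j(a,\omega)=\mathbb{E}_{\tilde m^*}l_j(a,\omega)$ and this common value is the one minimized over $a_j$; hence $(a_j,\tilde m^*)$ solves $(P_j)$.

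The step I expect to be the main obstacle is the rigorous justification of exchanging $\sup_L$ with $\int$ (used to pass from $(\tilde P_j^*)$ to $(P_j^*)$) and, relatedly, of differentiation under the integral in the first-order conditions: both require viewing $(\omega,\ell)\mapsto\ell(l_j(a,\omega)-\mu)-\lambda f(\ell)$ as a normal integrand so that a measurable maximizing selection $L^*(\cdot)$ exists and the integral functional is suitably differentiable (Rockafellar's interchange theorem). Secondary technicalities are the degenerate multiplier case $\lambda^*=0$, which forces $l_j-\mu^*$ into the directions along which $f$ has no curvature and typically corresponds to a boundary worst-case distribution, and the standing assumption that $f$ is differentiable with invertible derivative on the relevant range, so that $f'$ and $(f^*)'$ are genuine mutual inverses.
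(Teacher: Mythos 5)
Your proposal is correct and follows essentially the same route as the paper: recover $L^*$ by inverting $f'$ at $(l_j-\mu^*)/\lambda^*$ (equivalently via $(f^*)'$), check feasibility of $\tilde m^*=L^*\,dm$, and conclude through the no-gap equivalence of $(P_j)$ and $(P_j^*)$. Your version is in fact considerably more complete than the paper's two-line argument — the derivation of $\int_\Omega L^*\,dm=1$ and of the active divergence constraint from stationarity in $\mu$ and $\lambda$, as well as the flagged measurable-selection and $\lambda^*=0$ issues, are all left implicit there.
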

\begin{proof}
Let  $ \lambda^*(a), \mu^*(a)$ be solution to  $(P_j^*)$ associated with the profile $a.$ Then, the optimal likelihood $L^*$ is obtained by differentiating $f^*$ or by inverting the equation $f'(L^*)=\frac{ l_j-\mu^*}{\lambda^*}.$  As $\tilde{m}$ is a probability measure, and using the definition of $L^*,$ one gets
$$
d\tilde{m}^*(\omega)=  L^*dm(\omega).
$$
It follows  that $a^*_j, L^*$ solves the original problem    $(P_j).$
\end{proof}

Next we look at the existence of robust equilibria.
\subsubsection{Existence of distributionally robust equilibria}

As in classical game theory, sufficiency condition for existence of robust equilibrium can be obtained from the standard fixed-point theory which we recall next. 
Let $\mathcal{A}_j$ be nonempty compact convex sets and $l_j^*$  be continuous functions such 
that for any fixed  $(z_k)_{k\neq j}, $ the function $z_j  \mapsto l_j^*(z,m)$  is quasi-convex for each $j.$ Then, there exists at least one distributionally   robust {\it pure} equilibrium.

This result can be easily extended to the coupled-action constraint case for robust generalized Nash equilibria.
\subsubsection{Performance Evaluation} 
Using 
$$
\begin{array}{ll}
l_j^*(a,\lambda,\mu,m)=&
\lambda(\rho+f(1))+\mu \\  &+\lim_{N_j \rightarrow +\infty}\frac{1}{N_j}\sum_{k=1}^{N_j} \lambda f^*(\frac{ {l}_j(., \omega_k)-\mu}{\lambda}) 
\end{array} 
$$
where $\omega_k\sim m.$
Let $$m_{N_j}=\frac{1}{N_j}\sum_{k=1}^{N_j} \delta_{\omega_k}$$ be the empirical measure of the channel state and define 
$$
\epsilon_{N_j}\sqrt{N_j} =$$ $$\sqrt{N_j}\sup_{\tilde{m}\in D_{\rho_{N_j}}(m_{N_j})} \mathbb{E}_{\tilde{m}} l_j -\sqrt{N_j} \mathbb{E}_{m_{N_j}} l_j -\sqrt{N_j \rho_{N_j} \mbox{var}_{m_{N_j}}[l_j]}
$$
with  $N_j \rho_{N_j} <+\infty.$ 
Then, the following  holds:
$\epsilon_{N_j}\sqrt{N_j} \rightarrow 0$ as $N_j$ grows.
The above result states that the robust performance captures the risk by considering the variance and not just  the ergodic performance. 


\section{Bregman Learning Algorithms} \label{sec:three}
In this section we develop learning algorithms for  $({P}^*_j)_j$.

\subsection{Maximum Principle Features}
Consider the optimal control problem $\ \inf_{u\in U} \int_0^ T \hat{l}(t,z,u) dt$ such that $\dot{z}=u.$  The maximum principle is a necessary condition of 
optimality when the underlying function is sufficiently smooth. The adjoint variable $\dot{p}=-H_z=- \hat{l}_z$ and the optimal control optimizes the 
Hamiltonian $H(z,p)=\inf_{u\in U}\{  \hat{l}+ p u\} $ i.e., the Legendre-Fenchel transform of $- \hat{l}$ applied at the point $-p.$ A closed-form expression of the 
optimal control can be obtained and it is generically given by $u^*=H_p(z,p).$ A necessary condition for optimality  says that 
$H_{u^*} (u-u^*)\geq 0$ for any $u\in U,$ where $H_u$ denotes an element of the sub-differential of $H.$ This latter variational equation can be rewritten as
\begin{equation}
\begin{array}{ll}
0\leq H_{u^*} (u-u^*)=[\hat{l}_{u^*}+p](u-u^*),\ 
\end{array}
\end{equation}
for all $u\in U.$

In particular, an interior solution $u^*$ should solve $p=-\hat{l}_{u^*}$ and the adjoint equation becomes 
$
\dot{p}=\frac{d}{dt}(-\hat{l}_{u^*})=-\hat{l}_z(z,u^*),
$
which means that $$\frac{d}{dt}\hat{l}_{\dot{z}}=\hat{l}_z(z,\dot{z}).$$ The latter equation is also called Euler-Lagrange equation in the field of calculus of variations.
Since the minimization is among all possible curves, this minimum principle may exhibit features that allow to investigate faster time curves.

\subsection{Bregman Speedup Learning}
Let $g: \ \mathcal{A} \rightarrow \mathbb{R} $ be a differentiable, strictly convex function.
The Bregman divergence \cite{bregman} is $d_{g}: \mathcal{A} \times relint(\mathcal{A} ) \rightarrow \mathbb{R}$ and is defined as
$$d_g^{BR}(y,x)=g(y)-g(x)-\langle g_x(x), y-x\rangle,$$ where $relint(\mathcal{A} )$ denotes the relative interior of $\mathcal{A} .$ 

We investigate the equation $\frac{d}{dt}\hat{l}_{u}(z,u)=\hat{l}_z(z,u)$ for a class of quantity-of-interest $\hat{l}.$
Let the family of Bregman-based Lagrangian be $ \hat{l}(z,u)=e^{\alpha+\gamma}[d_{g}(z+e^{-\alpha}u,z)-e^{\beta} l^*(z)].$

\begin{proposition} \label{lem1}
The Euler-Lagrange equation reduces to the following second order  differential system, for $\dot{\gamma}=e^{\alpha},$
\begin{equation}
\begin{array}{ll}
 \ddot{z}+(e^{\alpha}-\dot{\alpha})\dot{z}+e^{2\alpha+\beta}g_{zz}^{-1}(z+e^{-\alpha}\dot{x}) l^*_z(z)
=0.
\end{array}
\end{equation} 
\end{proposition}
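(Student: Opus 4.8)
The plan is to carry out the Euler--Lagrange computation directly, exploiting the specific structure of the Bregman Lagrangian, and to watch for the cancellations that the choice $\dot\gamma=e^{\alpha}$ produces. Write $u=\dot z$, regard $\alpha,\beta,\gamma$ as scalar functions of $t$, and set $w:=z+e^{-\alpha}\dot z$, the point that appears as the first argument of the Bregman divergence. First I would compute the momentum $\hat l_u$. Since $d_g(y,x)=g(y)-g(x)-\langle g_z(x),y-x\rangle$, differentiating $d_g(z+e^{-\alpha}u,z)$ in $u$ gives $e^{-\alpha}\bigl(g_z(z+e^{-\alpha}u)-g_z(z)\bigr)$, so the prefactor $e^{\alpha}$ cancels against $e^{-\alpha}$ and
$$\hat l_u=e^{\gamma}\bigl(g_z(w)-g_z(z)\bigr).$$
Differentiating in $t$ by the product rule, and using $\dot w=(1-\dot\alpha e^{-\alpha})\dot z+e^{-\alpha}\ddot z$, yields
$$\frac{d}{dt}\hat l_u=\dot\gamma e^{\gamma}\bigl(g_z(w)-g_z(z)\bigr)+e^{\gamma}\bigl(g_{zz}(w)\dot w-g_{zz}(z)\dot z\bigr).$$

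Next I would compute $\hat l_z$, treating $u$ as an independent variable but remembering that $w=z+e^{-\alpha}u$ depends on $z$. The one delicate term is $\langle g_z(z),w-z\rangle=\langle g_z(z),e^{-\alpha}u\rangle$: because $w-z=e^{-\alpha}u$ carries no $z$-dependence, its $z$-derivative contributes only $g_{zz}(z)e^{-\alpha}u$ via the first slot of the inner product. Collecting the terms coming from $g(w)$, $g(z)$, the inner product, and $e^{\beta}l^*(z)$ gives
$$\hat l_z=e^{\alpha+\gamma}\Bigl(g_z(w)-g_z(z)-e^{-\alpha}g_{zz}(z)\dot z-e^{\beta}l^*_z(z)\Bigr).$$

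Now I would equate $\tfrac{d}{dt}\hat l_u=\hat l_z$ and divide by $e^{\gamma}$. The terms $g_{zz}(z)\dot z$ cancel on both sides, and the substitution $\dot\gamma=e^{\alpha}$ makes the terms $e^{\alpha}\bigl(g_z(w)-g_z(z)\bigr)$ cancel as well; what survives is the compact identity $g_{zz}(w)\dot w=-e^{\alpha+\beta}l^*_z(z)$. Multiplying by $e^{\alpha}$ and using $e^{\alpha}\dot w=\ddot z+(e^{\alpha}-\dot\alpha)\dot z$ gives $g_{zz}(w)\bigl(\ddot z+(e^{\alpha}-\dot\alpha)\dot z\bigr)=-e^{2\alpha+\beta}l^*_z(z)$; since $g$ is strictly convex, $g_{zz}(w)$ is invertible, and multiplying by $g_{zz}^{-1}(w)$ on the left produces the stated second-order system (the $\dot x$ in the statement should read $\dot z$).

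The computation is otherwise routine; the one place to be careful is the $z$-derivative of $d_g(w,z)$, where $w=w(z)$ enters both slots of the Bregman divergence and the inner-product term must not be double-counted — a careless chain rule here introduces spurious terms and destroys the cancellations on which the result rests. It is also worth stating explicitly that invertibility of $g_{zz}$ is exactly what allows one to pass from the implicit relation $g_{zz}(w)\dot w=-e^{\alpha+\beta}l^*_z(z)$ to the explicit second-order ODE claimed in the proposition.
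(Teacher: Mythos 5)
Your proof is correct and follows essentially the same route as the paper: a direct Euler--Lagrange computation for the Bregman Lagrangian, cancellation of the $g_{zz}(z)\dot z$ terms and of $g_z(w)-g_z(z)$ under $\dot\gamma=e^{\alpha}$, reduction to $\tfrac{d}{dt}[g_z(z+e^{-\alpha}\dot z)]=-e^{\alpha+\beta}l^*_z(z)$, and expansion via $g_{zz}^{-1}$. Your intermediate expressions for $\hat l_u$ and $\hat l_z$ are in fact sign-consistent (the paper's displayed partials carry a spurious overall minus that cancels in the Euler--Lagrange identity), and you correctly note that $\dot x$ in the statement should read $\dot z$.
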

\begin{proof} 
We start with the definition of Bregman divergence. 
A simple computation shows that
$$
\partial_y d_g(y,x)=g_x(y)-g_x(x),\ \  \partial_x d_g(y,x)=- g_{xx}(x).(y-x).
$$
By differentiating the functional $\hat{l}$ one gets
\begin{equation}
\begin{array}{ll}
 \hat{l}_z= e^{\alpha+\gamma}[-d_{g,1}(z+e^{-\alpha}u,z)-d_{g,2}(z+e^{-\alpha}u,z)
+e^{\beta} l^*_z]\\ =e^{\alpha+\gamma}[- g_z(y)+g_z(z)+g_{zz}(z)(y-z)+e^{\beta} l^*_z], \\
 \hat{l}_{u}=-e^{\gamma} d_{g,1}(z+e^{-\alpha}u,z)  = -e^{\gamma}[g_z(z+e^{-\alpha}u)-g_z(z)]
\end{array}
\end{equation}

It follows that 
\begin{equation}
\begin{array}{ll}
\frac{d}{dt}\hat{l}_u= -\dot{\gamma}e^{\gamma} [g_z(z+e^{-\alpha}u)-g_z(z)]\\ -e^{\gamma} \frac{d}{dt}[g_z(z+e^{-\alpha}u)-g_z(z)]\\
=-\dot{\gamma}e^{\gamma} [g_z(z+e^{-\alpha}u)-g_z(z)]\\ -e^{\gamma} [ g_{zz}(z+e^{-\alpha}u)(\dot{z}-\dot{\alpha}e^{-\alpha}\dot{z}+e^{-\alpha}\ddot{z})-g_{zz}(z)\dot{z}]
\\ =\hat{l}_z\\
=e^{\alpha+\gamma}[- g_z(y)+g_z(z)+g_{zz}(z)(y-z)+e^{\beta} l^*_z].
\end{array}
\end{equation}
Thus,
\begin{equation}
\begin{array}{ll}
-\dot{\gamma} [g_z(z+e^{-\alpha}\dot{z})-g_z(z)] \\ -[ g_{zz}(z+e^{-\alpha}\dot{z})(\dot{z}-\dot{\alpha}e^{-\alpha}\dot{z}+e^{-\alpha}\ddot{z})-g_{zz}(z)\dot{z}]
\\ 
=e^{\alpha}[- g_z(z+e^{-\alpha}\dot{z})+g_z(z)+g_{zz}(z)e^{-\alpha}\dot{z}+e^{\beta} l^*_z].
\end{array}
\end{equation}
By rearranging the terms in $\ddot{z}, \dot{z}$ we obtain
\begin{equation}
\begin{array}{ll}
e^{\alpha}(\dot{\gamma}-e^{\alpha})g_{zz}^{-1}(z+e^{-\alpha}\dot{z}) [g_z(z+e^{-\alpha}\dot{z})-g_z(z)] \\ +e^{\alpha} g_{zz}^{-1}(z+e^{-\alpha}\dot{z})[  g_{zz}(z+e^{-\alpha}\dot{z})(\dot{z}-\dot{\alpha}e^{-\alpha}\dot{z})
-g_{zz}(z)\dot{z}]
\\ 
+e^{2\alpha}g_{zz}^{-1}(z+e^{-\alpha}\dot{z})[g_{zz}e^{-\alpha}\dot{z}+e^{\beta} l^*_z]
+\ddot{z}
=0.
\end{array}
\end{equation}
Taking $\dot{\gamma}=e^{\alpha}$ yields
\begin{equation}
\begin{array}{ll}
e^{\alpha} g_{zz}^{-1}(z+e^{-\alpha}\dot{z})[  g_{zz}(z+e^{-\alpha}\dot{z})(\dot{z}-\dot{\alpha}e^{-\alpha}\dot{z})]
\\ 
+e^{2\alpha}g_{zz}^{-1}(z+e^{-\alpha}\dot{z})[g_{zz}e^{-\alpha}\dot{z}+e^{\beta} l^*_z]
+\ddot{z}
=0.
\end{array}
\end{equation}
The Euler-Lagrange equation reduces to
\begin{equation}
\begin{array}{ll}
 \ddot{z}+(e^{\alpha}-\dot{\alpha})\dot{z}+e^{2\alpha+\beta}g_{zz}^{-1}(z+e^{-\alpha}\dot{z}) l^*_z(z)
=0,
\end{array}
\end{equation}
which can be rewritten as
\begin{equation}
\begin{array}{ll}
 [\ddot{z}e^{-\alpha}+(1-\dot{a}e^{-\alpha})\dot{z}]g_{zz}(z+e^{-\alpha}\dot{z})+e^{\alpha+\beta} l^*_z(z)=0.
\end{array}
\end{equation}
From the above, the Bregman algorithm yields
\begin{equation}
\begin{array}{ll}
\frac{d}{dt}[ g_z(z+e^{-\alpha}\dot{z})]=-e^{\alpha+\beta} l^*_z(z).
\end{array}
\end{equation} This completes the proof.
\end{proof}
Note that the second order system is easily converted into a first-order system by setting 
$$\left\{ \begin{array}{l}
y=z+e^{-\alpha}u,\\
\dot{z}=u,\\
\dot{u}=\ddot{z}= - (e^{\alpha}-\dot{\alpha})u-e^{2\alpha+\beta}g_{zz}^{-1}(z+e^{-\alpha}u) l^*_z(z).
\end{array}\right.
$$
\begin{defi}
We say that $z\mapsto \tilde{l}^*(z)$ is a best response pseudo-potential function for the distributionally robust game $\mathcal{G}(m)$ if 
$$\arg\min_{z_j}  \tilde{l}^*(z)\subseteq \arg\min_{z_j}  {l}^*_j(z), \quad\mbox{for every $j.$}$$ 
\end{defi}
The Bregman algorithm is given by 
\begin{equation} \label{breg}
\begin{array}{ll}
\frac{d}{dt}[ g_z(z+e^{-\alpha}\dot{z})]=-e^{\alpha+\beta} \tilde{l}^*_z(z),\quad 
z(0)=z_0,
\end{array}
\end{equation}
where $\beta(t)=\beta(0)+\int_0^t e^{\alpha(t')}\ dt',$  $\beta(0)\geq 0,$ and $\alpha$ is a time-dependent function.
 \begin{proposition} \label{thm2}
 If $ \tilde{l}^*$ is convex then 
 $$0\leq  -\tilde{l}(z^*)+ \tilde{l}(z(t))\leq e^{-\beta(t)} c_0,$$
 where $$
 c_0=d_g^{BR}(z^*,z_0+e^{-\alpha(0)}\dot{z}_0)+e^{\beta(0)}[-\tilde{l}(z^*)+\tilde{l}(z(0))].
 $$
 \end{proposition}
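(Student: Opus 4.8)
Throughout I read $\tilde{l}$ in the statement as the best response pseudo-potential $\tilde{l}^*$ driving the Bregman flow (\ref{breg}), and I take $z^*$ to be a global minimizer of the convex function $\tilde{l}^*$. My plan is the standard Bregman--Lagrangian Lyapunov argument: exhibit an energy functional that is nonincreasing along (\ref{breg}) and whose positivity encodes the claimed rate. Writing $y(t):=z(t)+e^{-\alpha(t)}\dot{z}(t)$ for the ``mixed'' variable appearing in (\ref{breg}), I would define
$$
\mathcal{E}(t)\;=\;d_g^{BR}\bigl(z^*,\,y(t)\bigr)\;+\;e^{\beta(t)}\bigl[\tilde{l}^*(z(t))-\tilde{l}^*(z^*)\bigr].
$$
Two facts are then immediate: $\mathcal{E}(0)=c_0$ by the very definitions of $y(0)=z_0+e^{-\alpha(0)}\dot{z}_0$ and of $c_0$; and $\mathcal{E}(t)\ge 0$, since $d_g^{BR}(\cdot,\cdot)\ge 0$ (strict convexity of $g$) and $\tilde{l}^*(z(t))\ge \tilde{l}^*(z^*)$ by optimality of $z^*$.

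The core step is to show $\dot{\mathcal{E}}(t)\le 0$. Differentiating the Bregman term directly and simplifying gives $\frac{d}{dt}\,d_g^{BR}(z^*,y)=\langle \frac{d}{dt}g_z(y),\,y-z^*\rangle$; substituting the flow (\ref{breg}), namely $\frac{d}{dt}g_z(y)=-e^{\alpha+\beta}\tilde{l}^*_z(z)$, this becomes $-e^{\alpha+\beta}\langle \tilde{l}^*_z(z),\,y-z^*\rangle$. For the potential term, using $\dot{\beta}=e^{\alpha}$ (from $\beta(t)=\beta(0)+\int_0^t e^{\alpha(t')}\,dt'$) and $\dot{z}=e^{\alpha}(y-z)$, one gets
$$
\frac{d}{dt}\Bigl\{e^{\beta}\bigl[\tilde{l}^*(z)-\tilde{l}^*(z^*)\bigr]\Bigr\}=e^{\alpha+\beta}\bigl[\tilde{l}^*(z)-\tilde{l}^*(z^*)\bigr]+e^{\alpha+\beta}\langle \tilde{l}^*_z(z),\,y-z\rangle.
$$
Adding the two contributions, the terms proportional to $\langle \tilde{l}^*_z(z),y\rangle$ cancel, leaving
$$
\dot{\mathcal{E}}(t)=e^{\alpha+\beta}\Bigl[\tilde{l}^*(z)-\tilde{l}^*(z^*)+\langle \tilde{l}^*_z(z),\,z^*-z\rangle\Bigr].
$$
The bracket is nonpositive: it is exactly the first-order convexity inequality for $\tilde{l}^*$ with base point $z$ and test point $z^*$, i.e. $\tilde{l}^*(z^*)\ge \tilde{l}^*(z)+\langle \tilde{l}^*_z(z),z^*-z\rangle$. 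Since $e^{\alpha+\beta}>0$, we conclude $\dot{\mathcal{E}}\le 0$.

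Assembling the estimate is then routine: monotonicity gives $\mathcal{E}(t)\le \mathcal{E}(0)=c_0$, and discarding the nonnegative Bregman term yields $e^{\beta(t)}[\tilde{l}^*(z(t))-\tilde{l}^*(z^*)]\le c_0$, i.e. $-\tilde{l}^*(z^*)+\tilde{l}^*(z(t))\le e^{-\beta(t)}c_0$; the lower bound $0\le -\tilde{l}^*(z^*)+\tilde{l}^*(z(t))$ is just optimality of $z^*$. I expect the only genuine subtleties to be regularity, not the estimate: one needs $t\mapsto z(t)$ to be a $C^2$ trajectory along which $\mathcal{E}$ is differentiable, which presupposes $g_{zz}$ invertible along the orbit so the first-order form of (\ref{breg}) is well posed, and one needs $\tilde{l}^*$ differentiable so that $\tilde{l}^*_z$ is a genuine gradient (otherwise the computation survives with a measurable selection from $\partial\tilde{l}^*$ and the chain rule justified almost everywhere). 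The ``hard part'' is really just guessing the correct energy functional $\mathcal{E}$; once it is in hand, everything reduces to the mechanical differentiation above.
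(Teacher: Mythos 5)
Your proof is correct and follows essentially the same route as the paper: the same Lyapunov functional $V(t)=d_g^{BR}(z^*,z+e^{-\alpha}\dot z)+e^{\beta}[\tilde l^*(z)-\tilde l^*(z^*)]$, the same differentiation along the flow (\ref{breg}), and the same use of the first-order convexity inequality to get $\dot V\le 0$. The only (harmless) difference is that you impose $\dot\beta=e^{\alpha}$ exactly, so the cross term $e^{\beta}(e^{\alpha}-\dot\beta)\langle\tilde l^*_z(z),z^*-z\rangle$ vanishes, whereas the paper keeps the slightly more general condition $\dot\beta\le e^{\alpha}$ and bounds that term separately.
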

By choosing $\alpha(t)=t,$ $\beta(t)=e^t,$ and the error gap is $$
 -\tilde{l}(z^*)+ \tilde{l}(z(t))\leq e^{-e^t} c_0. $$
It takes $T_{\eta}=\log \log (\frac{c_0}{\eta})$ time units to reach a neighborhood of the equilibrium payoff of $z^*$ with a precision $\eta>0.$ This is faster than 
 Ishikawa-Nesterov  algorithm $O(\frac{1}{\sqrt{\eta}})$, gradient ascent method $O(\frac{1}{\eta}),$  no-regret dynamics, and back-box optimization $\frac{c_0}{\eta^2}.$ Thus, Bregman dynamics speeds up the learning and  improves classical methods with exponential decay.
 
 The proof of Proposition \ref{thm2} is based on a careful construction of a generalized best-response pseudo-potential function using Pontryagin maximum principle. It extends the framework developed in \cite{jordan1} to the context of strategic-form games. 
 Then we check that  the following function $V$ is a Lyapuvov function:  $$V(z^*,z(t))=d_g^{BR}(z^*,z(t)+e^{-\alpha(t)}\dot{z}(t))+e^{\beta(t)}[-\tilde{l}(z^*)+\tilde{l}(z(t))]$$ where $z(t)$ is generated by the Bregman algorithm. 
 
 Note that Proposition \ref{thm2} does not require the strong convexity property often used in the proof of convergence  gradient dynamics and Newton-based gradient methods. This is because the Bregman divergence is carefully designed to compensate that part. Table \ref{tab} summarizes the theoretical speedup advantages of Bregman algorithms over the state-of-the-art algorithms.
  \begin{table}[htb] 
\begin{tabular}{|l | l| l | l|}\hline
 &  Accuracy &  Time   to Reach \\ \hline
 This paper&  $O(e^{-\beta(t)})$ &$O(\beta^{-1}(\log (\frac{1}{\eta})))$  \\ \hline
  This paper (Bregman) &  $O(e^{-e^t})$ &$O(\log (\log (\frac{1}{\eta})))$  \\ \hline
Ishikawa-Nesterov&  $O(\frac{1}{t^2})$ &$O(\frac{1}{\sqrt{\eta}})$  \\ \hline
 Conjugate/proximal gradient  &   $O(\frac{1}{t})$ &  $O(\frac{1}{\eta})$ \\ \hline
 Gradient ascent &  $O(\frac{1}{t})$&  $O(\frac{1}{\eta})$  \\ \hline
Regret-min &  $O(\frac{\log t}{t})$ & -  \\  \hline
Standard black-box &  $O(\frac{1}{\sqrt{t}}+.)$ & $O(\frac{1}{\eta^2})$   \\ \hline
\end{tabular}\\  \caption{Performance of the proposed Bregman algorithm compared to the classical ones with a precision error within $\eta>0.$ } \label{tab}
\end{table}
\begin{proof}[Proof of Proposition \ref{thm2}]
Let us define function $V$ as follows.
\begin{equation}
\begin{array}{ll}
V(z,u,t, z^*)= d_g(z^*, z+e^{-\alpha}u)+e^{\beta}[-l^*(z^*)+l^*(z) ].
\end{array}
\end{equation}
Then  the function $V(z,u,t)$  is positive.
 Let us compute the time derivative of $V$ over the path $z(t),u(t)$ generated by the Bregman algorithm.
$$\frac{d}{dt}[ g_z(z+e^{-\alpha}\dot{z})]=-e^{\alpha+\beta} l^*_z(z).$$
We also have that \begin{equation}
\begin{array}{ll}
\frac{d}{dt}V(z(t),u(t),t, z^*)=\\
-\frac{d}{dt}[z+e^{-\alpha}u] g_{zz}(z+e^{-\alpha}u).(z^*-z-e^{-\alpha}u)\\ +\dot{\beta}e^{\beta}[l^*(z)-l^*(z^*)]+e^{\beta} l^*_z(z) \dot{z}
\end{array}
\end{equation}
By summing and subtracting the same term we have
\begin{equation}
\begin{array}{ll}
\frac{d}{dt}V(z(t),u(t),t, z^*)= \\
-\frac{d}{dt}[z+e^{-\alpha}u] g_{zz}(z+e^{-\alpha}u).(z^*-z-e^{-\alpha}u)\\ +\dot{\beta}e^{\beta}[l^*(z)-l^*(z^*)+l^*_{z}(z^*-z)-l^*_{z}(z^*-z)]\\ +e^{\beta} l^*_z(z) \dot{z} 
=-\frac{d}{dt}[g_{z}(z+e^{-\alpha}u)]. (z^*-z-e^{-\alpha}u)\\ +\dot{\beta}e^{\beta}[ l^*(z)-l^*(z^*)+l^*_{z}(z^*-z)]
\\
-\dot{\beta}e^{\beta} l^*_{z}(z^*-z)+e^{\beta} l^*_z(z) \dot{z}.
\end{array}
\end{equation}
The above leads to further expansion as follows
\begin{equation}
\begin{array}{ll}
\frac{d}{dt}V(z(t),u(t),t, z^*)=e^{\alpha+\beta} l^*_z(z)(z^*-z-e^{-\alpha}u)+\\  \dot{\beta}e^{\beta}[ l^*(z)-l^*(z^*)+l^*_{z}(z^*-z)] \\
-\dot{\beta}e^{\beta} l^*_{z}(z^*-z)+e^{\beta} l^*_z(z) \dot{z}\\
=e^{\alpha+\beta} l^*_z(z)(z^*-z)+e^{\beta}  l^*_z(z)(\dot{z}-u) \\
-\dot{\beta}e^{\beta} l^*_{z}(z^*-z)
+ \dot{\beta}e^{\beta}[l^*(z)-l^*(z^*)-l^*_{z}(z-z^*)]\\
= e^{\beta}( e^{\alpha}-\dot{\beta}) l^*_{z}(z^*-z)\\
+ \dot{\beta}e^{\beta}[ l^*(z)-l^*(z^*)-l^*_{z}(z-z^*)].
\end{array}
\end{equation}
By convexity of the function $l^*, $  $ [l^*(z)-l^*(z^*)-l^*_{z}(z-z^*)]\leq 0$ and $l^*_{z}(z^*-z)\leq 0.$
If $e^{\alpha}-\dot{\beta}\geq 0$ then
 \begin{equation}
\begin{array}{ll}
\frac{d}{dt}V(z(t),u(t),t, z^*)=e^{\beta}( e^{\alpha}-\dot{\beta})l^*_{z}(z^*-z)\\ + \dot{\beta}e^{\beta}[l^*(z)-l^*(z^*)-l^*_{z}(z-z^*)]\leq 0.
\end{array}
\end{equation}

Thus, $\frac{d}{dt}V(z(t),u(t),t, z^*)\leq 0$ for  $\dot{\beta}\leq e^{\alpha}.$ Then the function is decreasing  over the path of the Bregman algorithm. It follows that
$ e^{\beta}[l^*(z)-l^*(z^*)]\leq V(z,u,t, z^*)\leq V(z_0,u_0,t_0).$
Then, the global error is 
$$0\leq l^*(z)-l^*(z^*)\leq  e^{-\beta} V(z_0,u_0,t_0),$$
with $\dot{\beta}\leq e^{\alpha},$ which shows an exponential convergence to $z^*.$ This completes the proof.
\end{proof}

 \section{Stochastic Bregman Learning} 
 \label{sec:four}
 
  Very often the computation of the terms $\nabla \mathbb{E} l^*,$  $\ \mathbb{E}[ \lambda(\rho+f(1))+\mu+f^*(\frac{l_j-\mu}{\lambda})]$ or its partial derivatives is  challenging and depends on the structure of the distribution $m.$  We now propose a swarm learning to estimate the expected gradient and then insert it to the Bregman algorithm (\ref{breg}), leading to  particle swarm stochastic  Bregman algorithm.
 
 \subsection{Single particle }
We propose a stochastic Bregman learning framework which is adjusted based only 
on the realized integrand $h_j(z,\omega_j):=\lambda(\rho+f(1))+\mu+\lambda f^*(\frac{l_j-\mu}{\lambda}).$ The expected value of $h_j$ is  $\mathbb{E}_{\omega \sim m}h_j  =l_j^*.$
The stochastic Bregman dynamics yields
$$ \begin{array}{l}
\frac{d}{dt}[g_{j,z_j}(z+e^{-\alpha}u)]=-e^{\alpha+\beta} h_{j,z_j}(z,\omega_j)\\ =-e^{\alpha+\beta} [\mathbb{E}h_{j,z_j}(z,\omega_j)+ W_j]= -e^{\alpha+\beta} [l^*_{j,z}(z)+W_j(z,\omega_j)],\\
i\in \{1,2,\ldots, n\}.
\end{array}
$$
Function  $z=(a,\lambda,\mu)$ is now a stochastic process because of the stochastic term $\omega_j$ and $W_j(z)=h_{j,z_j}(z,\omega_j)-\mathbb{E}h_{j,z_j}(z,\omega_j).$ The variance of $W$ is being high and not vanishing because it is based on a single particle path discrepancy. We introduce in the following subsection a swarm of particles. 
 \subsection{Swarm of particles }
 Let us associate to each agent  $j$  a swarm of virtual particles $\omega_{jk}.$ Then, we have 
$$ \mathbb{E}h_{j,z_j}(z,.)=\lim_N \frac{1}{N}\sum_{k=1}^{N} h_{j,z_j}(z,\omega_{jk}). $$
Swarm-based stochastic Bregman dynamics yields
\begin{equation} \label{eqn2}\begin{array}{l}
\frac{d}{dt}[g_{z_j}(z+e^{-\alpha}u)]=-e^{\alpha+\beta}  \frac{1}{N}\sum_{k=1}^{N} h_{j,z_j}(z,\omega_{jk}),\\
\omega_{jk}\sim m, \quad 
j\in \{ 1,2,\ldots, n \}.
\end{array}
\end{equation} 
This is a mean-field-type interacting system and  can be seen  as control-dependent correlated noise modification of the Bregman dynamics as
$$ \begin{array}{l}
\frac{d}{dt}[g_{z_j}(z+e^{-\alpha}u)]=-e^{\alpha+\beta} [l^*_{j,z_j}(z)+\epsilon_{j,N}(z,\omega)],\\
j\in \{ 1,2,\ldots, n \},
\end{array}
$$
where  $\epsilon_{j,N}=\frac{1}{N}\sum_{k=1}^{N} h_{j,z_j}(z,\omega_{jk})-l^*_{j,z_j}(z)$ has zero mean and standard deviation as $$\sqrt{\mathbb{E}[\epsilon^2_{j,N}]}=\sqrt{\frac{\mbox{var}{[h_{j,z_j}(z,.)]}}{N^2}}.$$ 
For a realized $\omega,$ set 
\begin{eqnarray*}l^*_{j,N}=\frac{1}{N}\sum_{k=1}^{N} h_{j}(z,\omega_{jk}), \quad 
z^*_{j,N}\in \arg\min_{z}\frac{1}{N}\sum_{k=1}^{N} h_{j}(z,\omega_{jk}).\end{eqnarray*} Then, the particle swarm Bregman algorithm (\ref{eqn2}) gives as output the function $z_N(t)$ that satisfies
$$0\leq  -\tilde{l}_N(z^*_N)+ \tilde{l}_N(z_N(t))\leq e^{-\beta(t)} c_{0,N}$$
where $c_{0,N}:=d_g^{BR}(z^*_N,z_0+e^{-\alpha(0)}\dot{z}_0)+e^{\beta(0)}[-\tilde{l}_N(z^*_N)+\tilde{l}_N(z_0)].$

This says that the $N-$swarm per player Bregman scheme provides a good approximation of the robust equilibrium.

 \section{Numerical Investigation}\label{sec:five}
 To illustrate the particle swarm Bregman algorithm (\ref{eqn2})  we consider  specific robust games. 
 We consider two agents and the discriminator/adversary.  We choose 
 \begin{equation}
  f(x)=\left\{ \begin{array}{ll}
x\log x- x & \mbox{if}\ x>0,\\
0 & \mbox{if}\ x=0.
 \end{array} \right.
 \end{equation}
 We compute  $f(1)=-1$ and  $f'(x)=\log x,$ 
 $f''(x)=\frac{1}{x}>0.$ Hence $f$ is convex on $\mathbb{R}_+.$ The Legendre-Fenchel transform of $f$ yields
 $$
 f^*(\xi)=e^{\xi}.
 $$
 
  \subsection{Best-response Pseudo-Potential  Distributionally Robust Game}
  
  We set 
 $$l_j(a,\omega)= \log\left(1+\omega_1^2 a_1^2+\omega_2^2 a^2_2\right)$$ defined over $\mathbb{R}^2.$ The integrand function $h_j$ 
 is
 $$
 h_j=\lambda (\rho-1)+\mu +\lambda (1+\omega_1^2 a_1^2+\omega_2^2 a^2_2)^{\frac{1}{\lambda}} e^{-\frac{\mu}{\lambda}}
 $$
  The random variable $\omega$ is distributed over $m$
  and we assume that  $\omega$ has finite moments. 
  The stochastic robust objective function $l^*_{j,N}:$
  $$
 l^*_{j,N}=\lambda (\rho-1)+\mu + \frac{\lambda}{N}\sum_{k=1}^N (1+\omega_{1k}^2 a_1^2+\omega_{2k}^2 a^2_2)^{\frac{1}{\lambda}} e^{-\frac{\mu}{\lambda}}
 $$

 \begin{figure*}[htb]
\includegraphics[scale=0.3]{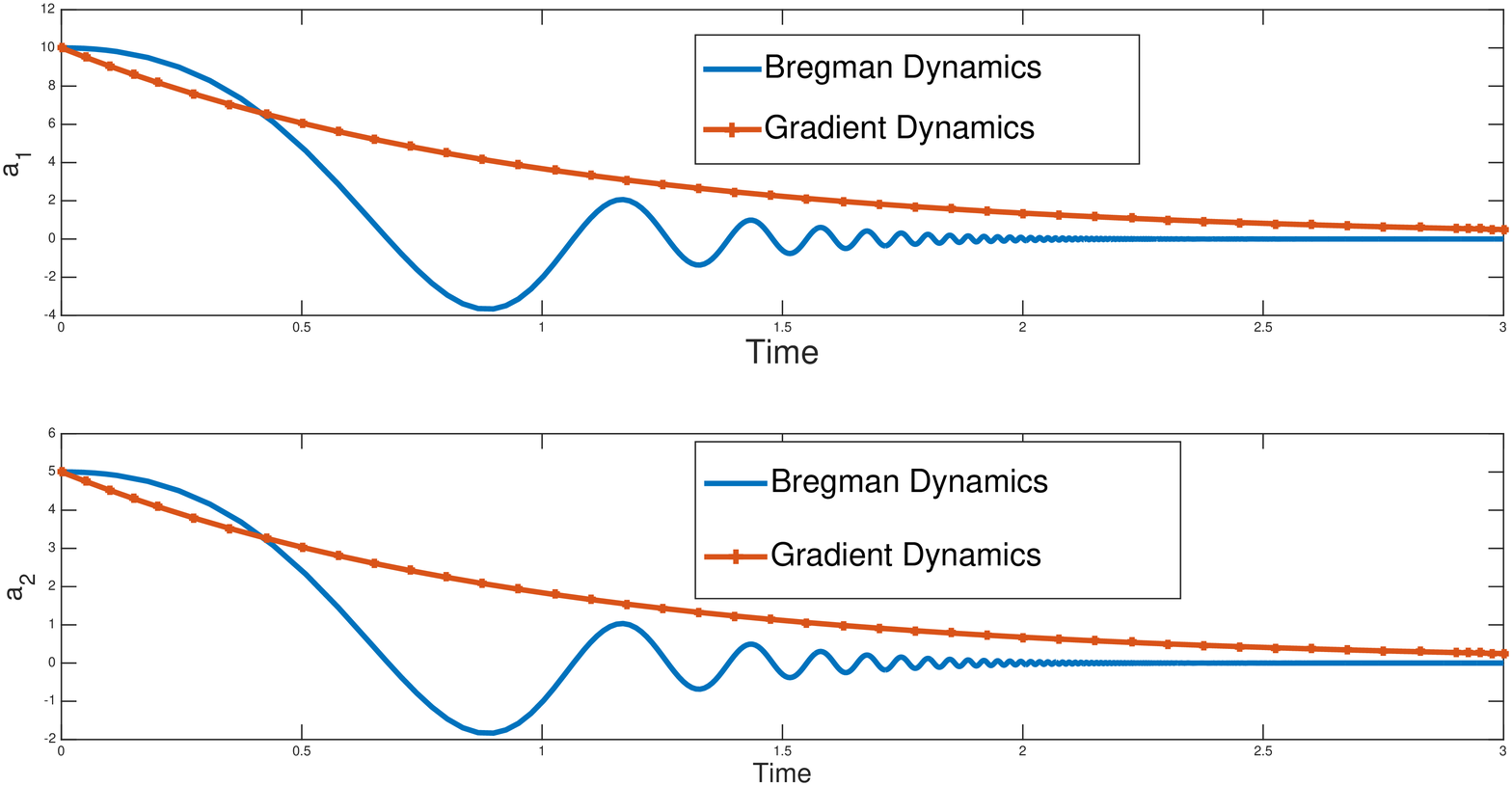}\\
\caption{Gradient vs Bregman-based dynamics.} \label{fig00v1}
\end{figure*}
  
 We illustrate the   Bregman-based dynamics in Figure \ref{fig00v1} $N=1000$ samples. We observe a rapid convergence to the
 robust equilibrium. The trajectory is not a descent but the amplitude of oscillation quickly decreases and an acceptable convergence time that is 20 times better  than the classical gradient dynamics.

 \subsection{Non-Convex Setting}
 
  We set 
 $$l_j(a,\omega)= \log\left(20 +\omega^2- \sin(a_1)\sin(a_2)\sqrt{a_1a_2}\right),$$ over the action space $[0, 10]^2.$ 
 The function $l_j$ has multiple local extrema as illustrated in Figures \ref{fig:one} and \ref{fig:onefrench}. The objective function of agent $j$ is non-convex and non-concave. The function $l_j$ is chosen because it does not fulfill the conditions of the Theorem \ref{thm2}. We observed that the Bregman algorithm behaves well even for this multimodal case which opens the investigation for non-convex objective functions. The multimodal function has a robust equilibrium around $(7.9, 7.9) $ and the equilibrium performance is around $ 7.88.$ In Figure \ref{fig:fit}, the  Bregman learning   outcome changes from the distributionally robust  Nash equilibrium $(7.9, 7.9)$ to a local maximum  $(7.9, 5.1).$
\begin{figure}[htb]
\includegraphics[width=7cm,height=7cm]{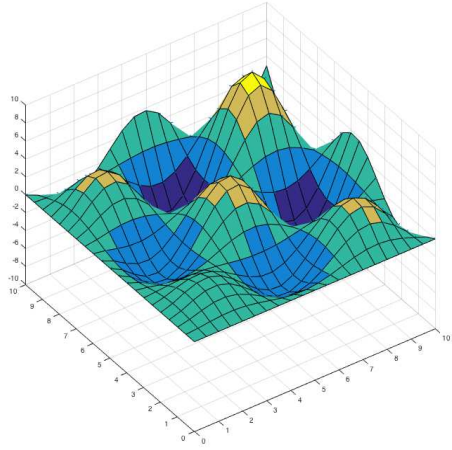}
\caption{The function $l_j$ is non-convex,  non-concave and has multiple local extrema.}
\label{fig:one}
\end{figure}

\begin{figure}[htb]
\includegraphics[width=7cm,height=7cm]{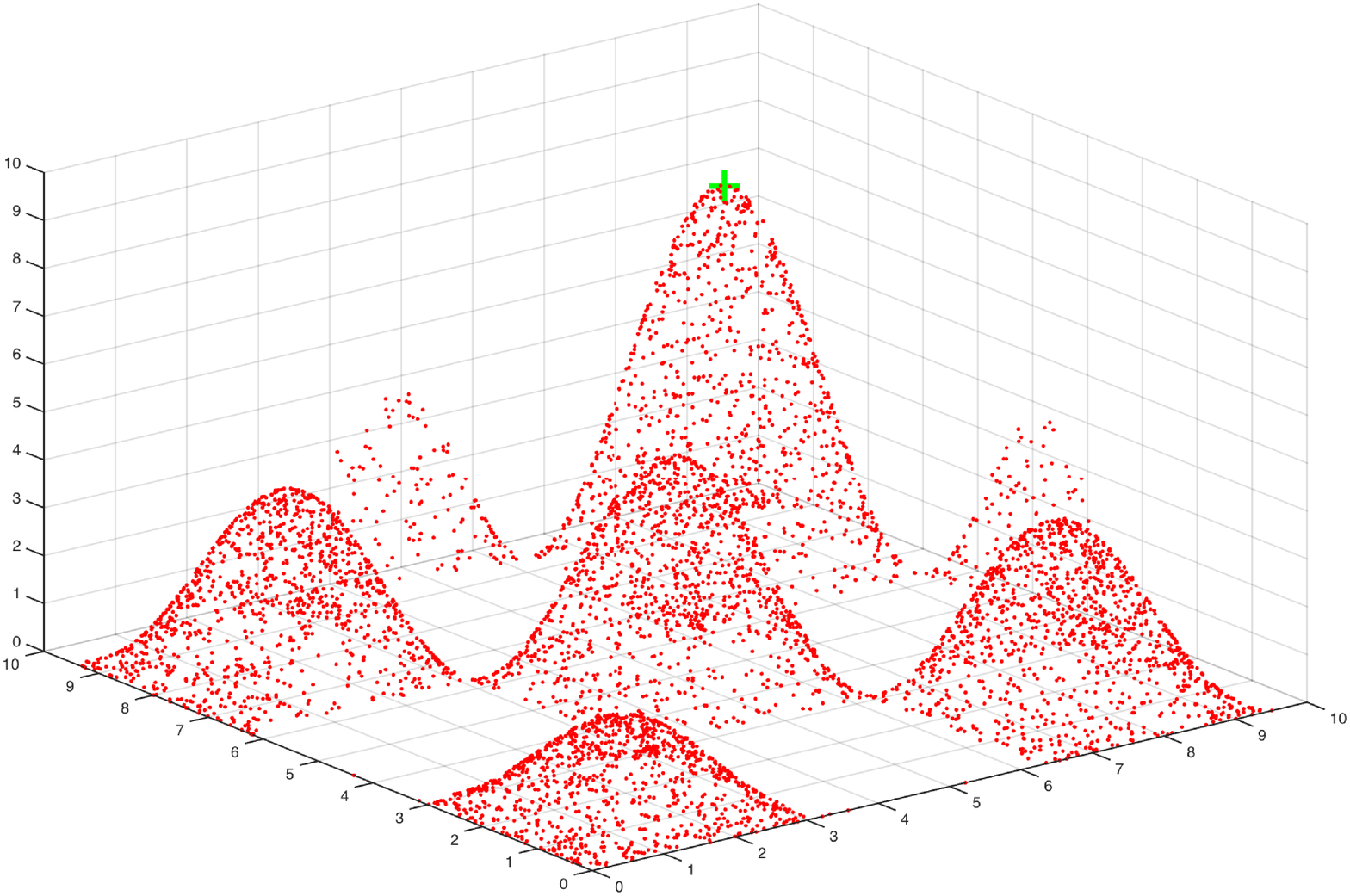}
\caption{  The particle swarm Bregman learning   leads to the robust  Nash equilibria  around $(7.9, 7.9) $ and the equilibrium value is around $ 7.88.$}
\label{fig:onefrench}
\end{figure}
 
\begin{figure}[htb]
\includegraphics[width=9cm,height=12cm]{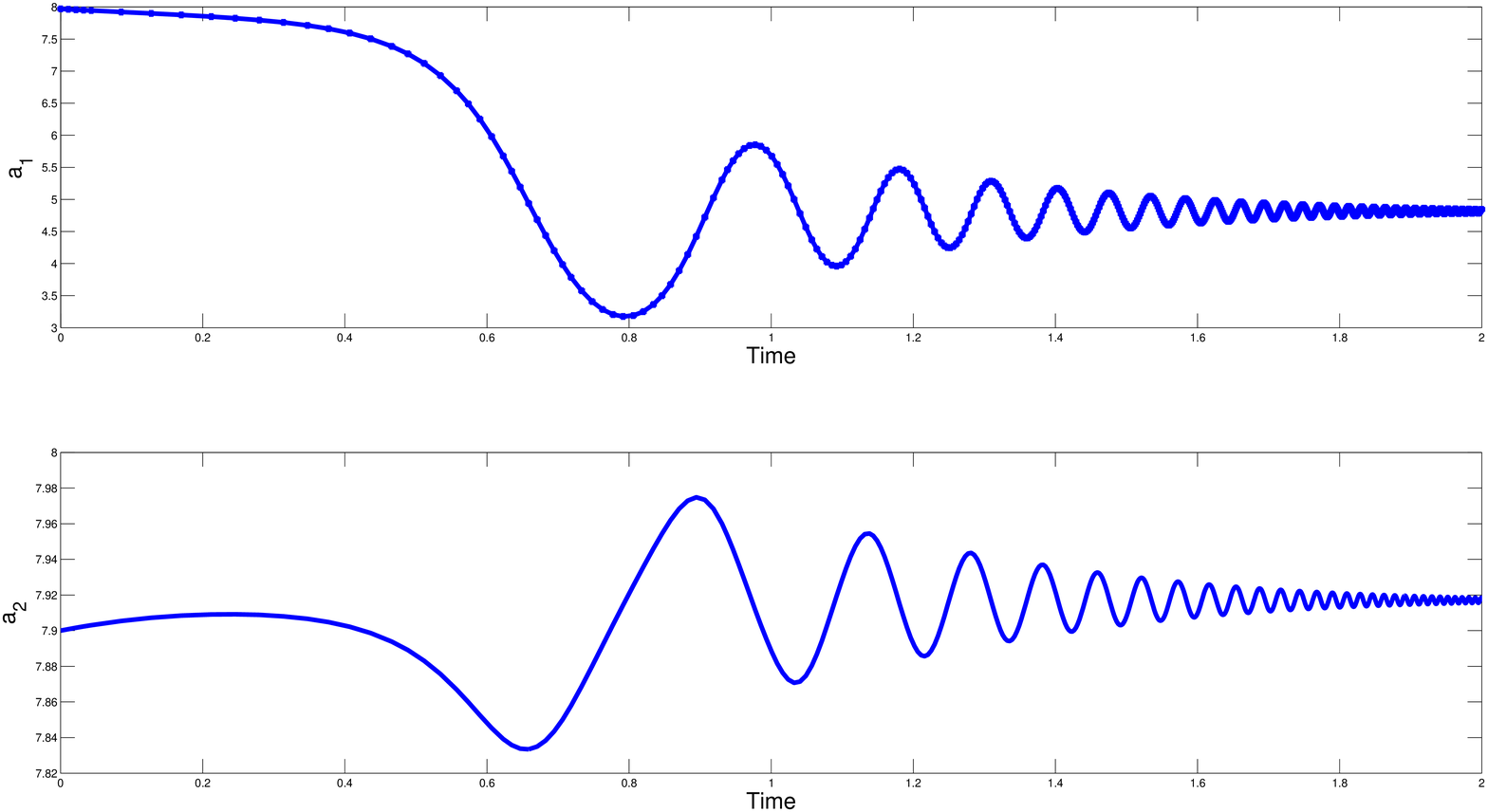}
\caption{  The  Bregman learning   outcome changes from the robust  Nash equilibrium $(7.9, 7.9)$ to a local maximum  $(7.9, 5.1).$ }
\label{fig:fit}
\end{figure}

%
%

\section{Finite action spaces case} \label{sec:six}
We now discuss how the above framework can be used in the discrete action space case. We limit our exposition to the finite set. $\mathcal{A}_j$ is a finite (with two or more actions), it is not a convex set. We convexify them as standard mixed strategy approach. $\mathcal{A}_j$ will be replaced by $\mathcal{X}_j$ the simplex over $\mathcal{A}_j.$ The robust payoff on pure action profile will be replaced by the expected robust payoff.  One obtains the so-called {\it mixed extension} of the game. The existence of distributionally robust mixed equilibrium follows from standard fixed-point theorems.

\section*{Acknowledgment}
This research is supported by U.S. Air Force Office of Scientific Research under grant number FA9550-17-1-0259. This research was conducted when D. Bauso was visiting NYUAD.

\section{Conclusion}
\label{sec:last} 
We have introduced distributionally robust game with continuous action space for each agent and a possible adversarial modification of the uncertainty. The problem is formulated using a notion of divergence between two measures: the modified measure  and the exact measure associated to the uncertainty.
 In the context of existence of robust solutions, additional difficulties arise if in addition a robustness condition or an adversarial control of the distribution is involved in the objective function. We have used  triality theory to transform the objective function of each agent. This transformation reduces considerably the curse of dimensionality of the problem. Then,  sufficient conditions of existence of solution are derived.
We constructed a speedup learning algorithm based on Bregman discrepancy. The methodology does not require strong convexity assumptions as in the classical gradient algorithms. The convergence time is shown to be much faster than the current state-of-the-art algorithms developed for pseudo-potential games. Our future work aims to use and apply  the approach to adversarial generative networks.

\bibliographystyle{ACM-Reference-Format}

\begin{thebibliography}{99}
\bibitem{free}M. Aghassi and D. Bertsimas. Robust game theory. Mathematical Programming, 107(1-2):231-273, 2006.
\bibitem{free2}Dimitris Bertsimas, David B Brown, and Constantine Caramanis. Theory and applications of
robust optimization. SIAM review, 53(3):464-501, 2011.

\bibitem{conti6}
Barry D. Nichols: A Comparison of Action Selection Methods for Implicit Policy Method Reinforcement Learning in
Continuous Action-Space, IEEE World Congress on Computational Intelligence (IEEE WCCI), Vancouver, Canada, 24-29 July 2016.

\bibitem{jordan1}
Wibisono, Andre and  Wilson, Ashia C. and  Jordan, Michael I.,
A variational perspective on accelerated methods in
optimization, In proceedings of the National Academy of
Sciences, 113, {47},  2016. 

\bibitem{conti5}D. Friedman, D. N. Ostrov: Gradient dynamics in population games: Some basic results, Journal of Mathematical Economics,
vol. 46, Issue 5, 20 Sept. 2010, pp. 691-707, Mathematical Economics: Special Issue in honour of Andreu Mas-Colell, Part 1
\bibitem{conti4} D. Friedman, D. N. Ostrov: Evolutionary dynamics over continuous action spaces for population games that arise from symmetric two-player games, 
Journal of Economic Theory, vol. 148, Issue 2, March 2013, pp. 743-777
\bibitem{conti3}
S. Perkins and D. S. Leslie, Stochastic fictitious play with continuous action sets, J. Econ. Theory, vol. 152, pp. 179-213, 2014.
\bibitem{conti2}
M.W. Cheung, Pairwise comparison dynamics for games with continuous strategy space, J. Econ. Theory, vol. 153, pp. 344-375, 2014.
 
\bibitem{conti1}
L. Pavel, Games with Continuous Action Spaces, In 
Game Theory for Control of Optical Networks,
Part of the series Static \& Dynamic Game Theory: Foundations \& Applications, pp 45-54, ISBN
978-0-8176-8321-4, 2012

\bibitem{bregman}
L. M. Bregman. The relaxation method of finding the common point of convex sets and its application to the solution of problems in convex programming. USSR Computational Mathematics and Physics, 7, 200-217, 1967.
\bibitem{sion} Sion, M. (1958). On general minimax theorems. Pacific Journal of Mathematics. 8 (1): 171-176
\bibitem{dga} 
D. Bauso, H. Tembine, T. Basar,  Robust Mean-Field Games, Journal of Dynamic Games and Applications, 27 pages, DOI 10.1007/s13235-015-0160-4, 
September 2016, volume 6, Issue 3, pp 277-303.
\bibitem{book}
H. Tembine, Distributed strategic learning for wireless engineers, 2012, CRC Press, 496 Pages,ISBN 9781439876374
\end{thebibliography}

\end{document}